\definecolor{aablue}{rgb}{0.09,0.32,0.44} 
\title{Divisibility and Laws in Finite Simple Groups}
\author{Gady Kozma}
\address{G.K., Department of Mathematics, The Weizmann Institute of Science, POB 26, Rehovot 76100, Israel}
\email{gady.kozma@weizmann.ac.il}
\author{Andreas Thom}
\address{A.T., Mathematisches Institut, U Leipzig,
PF 100920, 04009 Leipzig, Germany}
\email{andreas.thom@math.uni-leipzig.de}
\date{\today}
\newtheorem{theorem}{Theorem}[section]
\newtheorem{proposition}[theorem]{Proposition}
\newtheorem{lemma}[theorem]{Lemma}
\theoremstyle{definition}
\newtheorem{definition}[theorem]{Definition}
\newtheorem{remark}[theorem]{Remark}
\newcommand{\N}{{\mathbb N}}
\newcommand{\Z}{{\mathbb Z}}
\DeclareMathOperator{\Sym}{Sym}
\DeclareMathOperator{\Alt}{Alt}
\begin{document}

\onehalfspace

\begin{abstract}
We provide new bounds for the divisibility function of the free group
${\mathbf F}_2$ and construct short laws for the symmetric groups
$\Sym(n)$. The construction is random and relies on
the classification of the finite simple groups. We also give bounds on the length of laws for finite
simple groups of Lie type. 
\end{abstract}

\maketitle

\tableofcontents

\hypersetup{linkcolor=aablue}

\section{Introduction}
\label{intro}
We want to start out by explaining a straightforward and well-known application of the prime number theorem.
We consider the Chebychev functions
$$\vartheta(x) = \sum_{p \leq x} \log(p) \quad \mbox{and} \quad \psi(x) = \sum_{p^k \leq x} \log(p^k),$$
where the summation is over all primes (resp.\ prime powers) less than or equal to $x$. It is well-known and equivalent to the prime number theorem, that
\begin{equation} \label{pnt}
\lim_{x \to \infty}\frac{\vartheta(x)}{x} = 1 \quad \mbox{and} \quad\lim_{x \to \infty}\frac{\psi(x)}{x} = 1.
\end{equation}
We conclude that for each $\varepsilon>0$, there exists $N(\varepsilon) \in \N$ such that for all $n \geq N(\varepsilon)$, there exists some prime $p \leq (1+ \varepsilon)\log(n)$ such that $n$ is not congruent to zero modulo $p$. Indeed, if this was not the case, then
$$n \geq \prod_{p \leq (1+\varepsilon)\log(n)} p = \exp(\vartheta((1+ \varepsilon)\log(n)))$$
for infinitely many $n$, which contradicts Equation \eqref{pnt}. On the other side, setting $n:= \exp(\psi(x))$, we obtain an integer which is congruent to zero modulo any prime power less than or equal $x$. Thus, any prime power $p^k$ so that $n$ is not congruent to zero modulo $p^k$ must satisfy $p^k \geq x \geq (1-\varepsilon)\log(n),$ if $x$ is large enough.

\vspace{.2cm}

These arguments allow to determine the asymptotics of the so-called divisibility function for the group $\Z$, defined more generally as follows.
\begin{definition}For any group $\Gamma$, we define the divisibility function $D_{\Gamma} \colon \Gamma \to \N$ by
$$D_{\Gamma}(\gamma):= \min\{ [\Gamma:\Lambda] \mid \Lambda \subset \Gamma \mbox{ a subgroup}, \gamma \not \in \Lambda \} \in \N \cup \{\infty\}.$$
\end{definition} 
The simplest case to study is $\Gamma=\Z$ and there the prime number theorem implies (as explained above) that 
$$\limsup_{n \to \infty} \frac{D_{\Z}(n)}{\log(n)} = 1.$$
This result appeared in work of Bou-Rabee \cite[Theorem 2.2]{br}, where the study of quantitative aspects of residual finiteness was initiated.
In order to study similar asymptotics for more complicated groups, we restrict to the case that $\Gamma$ is generated by some finite symmetric subset $S \subset \Gamma$. We denote the associated word length function by $|\cdot| \colon \Gamma \to \N$ and set
$$D_{\Gamma}(n) := \max\{ D_{\Gamma}(\gamma) \mid \gamma \in \Gamma, |\gamma| \leq n \}.$$

We now consider the free group ${\mathbf F}_2$ with two generators and
its natural generating set $S:= \{a,a^{-1},b,b^{-1}\}.$ It is
elementary (using Lemma \ref{elem} below) to see that
$D_{{\mathbf F}_2}(n) \leq n+1.$ The only known improvement to this trivial bound has been obtained by Buskin (see \cite{MR2583614})
$$D_{{\mathbf F}_2}(n) \leq \frac n2+2.$$

Concerning lower bounds, it is easy to see the element
$a^{\exp(\psi(n))} \in {\mathbf F}_2$ lies in all subgroups of index
less than or equal $n$ (see again Lemma \ref{elem}), and hence Equation \eqref{pnt} implies 
$$\limsup_{n \to \infty} \frac{D_{\mathbf F_2}(n)}{\log(n)} \geq 1,$$
i.e., $D_{\mathbf F_2}(n) = \Omega(\log(n))$.
Bogopolski asked if $D_{\mathbf F_2}(n) = O(\log(n))$ holds, see \cite[Problem 15.35]{MR2263886}. This question was answered negatively by Bou-Rabee and McReynolds \cite[Theorem 1.4]{brm}, giving the following explicit lower bound.
\begin{equation} \label{mr}
D_{\mathbf F_2}(n) \ge \frac{\log(n)^2}{C\log \log(n)}.
\end{equation}
(Here and below, $C$ refers to absolute constants, which are positive
and sufficiently large, and may denote different constants in
different formulas.)
Independent work of Gimadeev-Vyalyi \cite{MR2972333} on the length of shortest laws for ${\rm Sym}(n)$ implies that
\begin{equation*}
D_{\mathbf F_2}(n) \ge \left(\frac{\log(n)}{C\log\log(n)} \right)^2,
\end{equation*}
but the connection with Bogopolski's question apparently remained unnoticed. We will improve \eqref{mr} and obtain in Theorem \ref{divthm} a new lower bound as follows:
$$D_{{\mathbf F}_2}(n) \ge \exp\left(\left(\frac{\log(n)}{C\log \log(n)} \right)^{1/4} \right).$$
Assuming Babai's Conjecture on the diameter of Cayley graphs of permutation groups (see the discussion below), we even obtain
$$D_{{\mathbf F}_2}(n) \ge \exp\left(\frac{\log(n)}{C\log \log(n)}  \right) = n^{\frac1{C \log\log(n)}}.$$
Note that this estimate is now getting much closer to the linear upper bound for the divisibility function. 

Currently, the proofs are based on consequences of the classification of finite simple groups and it would be desirable to obtain more elementary arguments for these bounds.
It remains to be an outstanding open problem to determine the precise asymptotic behaviour of the function $D_{{\mathbf F}_2}$.

We now turn to the closely related study of two-variable laws for specific groups. 
\begin{definition}
For a group $\Gamma$, we say that $\Gamma$ satisfies the law $w \in {\mathbf F}_2$ if 
$$w(g,h)=e \quad \forall g,h \in \Gamma.$$ 
\end{definition}
We denote by $\Sym(n)$ the symmetric group on the letters
$\{1,\dots,n\}$.  Along the way we will show the existence of short
laws which are satisfied by the group $\Sym(n)$. We denote by
$\alpha(n)$ the length of the shortest law for the group $\Sym(n)$. The shortest previously known laws for $\Sym(n)$
implied 
\begin{equation} \label{oldbound}
\alpha(n)\le \exp\left(C(n \log n)^{1/2} \right),
\end{equation}
a result which was obtained in the course of the proof of \cite[Theorem 1.4]{brm}. The proof of this result was based on the fact that the maximal order of an element in ${\rm Sym}(n)$ is bounded by $\exp(C (n \log(n))^{1/2})$  --- a result of Landau \cite{landau} from 1903. We will give a quick proof of \eqref{oldbound} in Section \ref{divlaw}.
Independently, Gimadeev-Vyalyi \cite{MR2972333} obtained the slightly weaker bound
$$\alpha(n) \le \exp\left(Cn^{1/2} \log(n) \right).$$
We are able to improve these bounds substantially and will show
\begin{theorem}\label{random} 
The length of the shortest non-trivial law for ${\rm Sym}(n)$ satisfies\begin{equation} \label{randomnew}
\alpha(n)\le\exp\left(C\log(n)^4 \log\log(n)\right).
\end{equation}
\end{theorem}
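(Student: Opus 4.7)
The strategy is to build the law for $\Sym(n)$ by combining three ingredients: (i) the composition-series structure of arbitrary 2-generator subgroups of $\Sym(n)$, which is governed by the Classification of Finite Simple Groups (CFSG) and O'Nan--Scott-type results; (ii) short laws for the simple composition factors that appear, namely alternating groups of substantially smaller degree (handled by induction on $n$) and finite simple groups of Lie type (handled by the Lie-type bounds of the paper); and (iii) a probabilistic substitution argument making the resulting word simultaneously a law for every 2-generator subgroup $H \leq \Sym(n)$, since different $H$ admit different normal series and no single a priori filtration works.

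More precisely: given $H = \langle g,h\rangle \leq \Sym(n)$, CFSG yields a normal series $H = H_0 \trianglerighteq H_1 \trianglerighteq \cdots \trianglerighteq H_k = \{e\}$ whose successive quotients $H_i/H_{i+1}$ are either solvable of bounded derived length, or direct products of boundedly many copies of simple groups of restricted type --- either $\Alt(m)$ with $m$ much smaller than $n$, a finite simple group of Lie type of controlled rank and characteristic, or a sporadic group. The solvable quotients admit short laws from Chebychev-type prime-power words as in the introduction, the alternating quotients from the inductive hypothesis, and the Lie-type quotients from the paper's own bounds. These level-laws are then assembled into a single word by recursion: given $W_{i-1}$ such that $W_{i-1}(g,h) \in H_i$ for every pair $(g,h)$, we set
$$W_i(x,y) := w_i\bigl(W_{i-1}(u_i(x,y), v_i(x,y)),\ W_{i-1}(u'_i(x,y), v'_i(x,y))\bigr),$$
where $w_i$ is the level-$i$ law and $u_i, v_i, u'_i, v'_i \in {\mathbf F}_2$ are independent random words of polylogarithmic length.

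The argument is closed by a union bound over $(g,h) \in \Sym(n)^2$: for each such pair, CFSG-based rapid mixing or character estimates guarantee that each substitute $W_{i-1}(u_i(g,h), v_i(g,h))$ lies in the right coset of $H_i$ with failure probability exponentially small in the length of $u_i, v_i$; choosing that length so the total failure probability beats $(n!)^{-2} \leq \exp(-2n\log n)$ forces the polylogarithmic scale, and multiplying lengths through the $O(\log n)$ levels of the normal series produces the final bound $\exp(C\log^4 n \log\log n)$. The main obstacle is precisely this union-bound step: obtaining CFSG-based concentration sharp enough to handle all of $\Sym(n)$ simultaneously --- and verifying that the random substitutes really land in $H_i$ rather than just in some enveloping subgroup --- is the principal technical difficulty, and the tension between keeping the random words short and achieving exponential concentration against an $\exp(2n\log n)$-size union bound is what ultimately drives the $\log^4 n$ exponent.
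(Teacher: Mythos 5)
Your proposal takes a genuinely different route from the paper's, and it has a gap that I do not think can be repaired within the framework you describe.

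The central step in your argument is the recursion
\[
W_i(x,y) := w_i\bigl(W_{i-1}(u_i(x,y), v_i(x,y)),\ W_{i-1}(u'_i(x,y), v'_i(x,y))\bigr),
\]
under the hypothesis ``$W_{i-1}(g,h)\in H_i$ for every pair $(g,h)$,'' where $H_i$ is the $i$-th term of a normal series for $H=\langle g,h\rangle$. The problem is that $H_i$ is a function of the pair being evaluated: when you substitute the pair $(u_i(g,h),\,v_i(g,h))$ into $W_{i-1}$, the output lands in the $i$-th term of the normal series of $\langle u_i(g,h), v_i(g,h)\rangle$, which is a possibly much smaller subgroup with an entirely unrelated composition series. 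There is no a priori reason for this element to belong to $H_i$ for the original $H$. You flag this yourself (``verifying that the random substitutes really land in $H_i$\dots''), and you propose to handle it by ``CFSG-based rapid mixing or character estimates'' with an exponentially small failure probability. But mixing pushes the distribution of $W_{i-1}(u_i(g,h), v_i(g,h))$ toward something like the uniform distribution on a (large) subgroup, so the probability of landing in a \emph{small} subgroup $H_i$ is roughly $1/[\,\cdot\,:H_i]$, which is typically astronomically small, not exponentially close to $1$. No amount of union-bound bookkeeping fixes a step whose per-instance success probability is already tiny, and extending $u_i,v_i$ makes the problem worse, not better. This is why the paper uses the probabilistic argument to hit a \emph{dense} set --- the set $P(k)$ of $k$-cycles in $\Sym(k)$, which has density $1/k$ --- and not a small subgroup; the probabilistic step is then raised to the $k$-th power to produce the identity. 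Landing in a dense conjugacy class with polynomial density via Helfgott--Seress mixing is qualitatively different from landing in a specified small subgroup.

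There are further issues. First, the inductive hypothesis as stated does not close: the composition factors of a $2$-generated subgroup of $\Sym(n)$ can include $\Alt(n)$ itself (when $H=\Alt(n)$ or $\Sym(n)$), so ``$\Alt(m)$ with $m$ much smaller than $n$'' is not what Jordan--H\"older gives you, and this case has to be treated head-on rather than by induction --- which is precisely what the paper's probabilistic construction of the word $v$ does. Second, ``the solvable quotients admit short laws from Chebychev-type prime-power words'' is unjustified: the prime-number-theorem argument in the introduction gives short laws for $\Z/N\Z$-type quotients, not for solvable groups of unbounded derived length. Third, the source of the $\exp(C\log^4 n\log\log n)$ bound is the Helfgott--Seress diameter theorem (the paper uses it explicitly to bound the mixing time of the word random walk); your sketch never invokes it, so the exponent $4$ appears as an artifact of the heuristic accounting rather than from a concrete ingredient. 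The paper's actual proof avoids Jordan--H\"older entirely: it uses Liebeck's structure theorem (O'Nan--Scott) to split subgroups into five cases --- $\Sym/\Alt$, intransitive, transitive imprimitive, product action, and small order --- handles the first by the $k$-cycle random-walk argument, the small-order case by a direct Landau-type bound, and the wreath-product cases by a \emph{deterministic} substitution (embedding $\mathbf F_2 \hookrightarrow \mathbf F_4 \hookrightarrow \mathbf F_2$ to combine laws $x_i$ and $x_j$ for $\Sym(2^i)$, $\Sym(2^j)$), then closes a double recursion in the exponents. If you want to pursue a proof, I would suggest replacing the composition-series recursion by the Liebeck/O'Nan--Scott case split and reserving the probabilistic argument for hitting full cycles.
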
 The proof uses 
\begin{itemize}
\item a remarkable structure result on the possible primitive subgroups of ${\rm Sym}(n)$ by Liebeck \cite{MR758332} which is ultimately based on the O'Nan-Scott Theorem and the Classification of Finite Simple Groups (CFSG), and
\item a recent breakthrough result by Helfgott-Seress
  \cite{seresshelfgott} on the diameter of Cayley graphs of ${\rm
    Sym}(n)$ (also using (CFSG)).
\end{itemize}
The main contribution to our estimate for the word length, $\exp\left(C\log(n)^4\log\log n)\right)$ comes
from the Helfgott-Seress theorem. The Helfgott-Seress theorem is not
conjectured to be sharp. In fact, a conjecture of Babai
\cite[Conjecture 1.7]{babser} states that this term should be
$n^C$. It is natural to ask how our result would improve under this
conjecture.
\begin{theorem}\label{thm:underBabai}If Babai's conjecture holds, then 
$$\alpha(n)\le\exp\left(C\log(n) \log\log(n)\right) =
  n^{C\log\log(n)}.$$ 
\end{theorem}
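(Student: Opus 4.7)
The plan is to run the argument of Theorem \ref{random} verbatim, with the Helfgott--Seress diameter bound for $\Sym(n)$ replaced at each invocation by the polynomial bound predicted by Babai's conjecture. No other change to the argument is needed.

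Recall that the proof of Theorem \ref{random} constructs a law for $\Sym(n)$ by analysing an arbitrary pair $(g,h) \in \Sym(n)^2$ via the subgroup $H = \langle g, h \rangle$. Using the O'Nan--Scott theorem and Liebeck's bound, $H$ falls into one of the following structural types: $H$ is intransitive (so contained in $\Sym(k) \times \Sym(n-k)$), imprimitive (so contained in $\Sym(m) \wr \Sym(k)$ with $mk = n$ and $m \leq \sqrt n$), of product-action, diagonal or affine type, a small primitive subgroup of order at most $n^{O(\log n)}$, or $H = \Sym(n)$ or $\Alt(n)$ itself. Every case other than the last reduces, via the standard wreath-product law construction, to laws for symmetric groups on at most $\sqrt n$ letters. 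The only place where the diameter of a Cayley graph of $\Sym(n)$ is required is the hard case $H \in \{\Sym(n), \Alt(n)\}$, where one needs to exhibit a short word in the two given generators $g,h$ that evaluates to the identity.

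First I would identify this diameter invocation precisely; then I would trace the recursion. It has depth $O(\log\log n)$, since at each step $n$ is replaced by roughly $\sqrt n$ through the wreath-product reduction, and at each level the chosen diameter bound enters multiplicatively. Under Helfgott--Seress the per-level factor is $\exp(C (\log n)^4)$, giving the bound $\exp(C (\log n)^4 \log\log n)$ of Theorem \ref{random}. Under Babai's conjecture the per-level factor becomes $n^C$, yielding total length $(n^C)^{O(\log\log n)} = \exp(C \log(n) \log\log(n))$, as claimed.

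The main thing to verify is that no other ingredient of the proof of Theorem \ref{random} is limited by the Helfgott--Seress bound rather than by the diameter itself. In particular, the small primitive subgroups provided by Liebeck's theorem, of order at most $n^{O(\log n)}$, have diameters of order $(\log n)^{O(1)}$ under the general form of Babai's conjecture for non-abelian finite simple groups $G$ (that the diameter of any Cayley graph of $G$ is at most $(\log |G|)^{O(1)}$), and this contribution is absorbed harmlessly into the final bound. Beyond this bookkeeping, no new ideas are required.
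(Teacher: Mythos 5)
Your high-level plan — replace the Helfgott--Seress diameter bound by the polynomial $n^C$ supplied by Babai's conjecture — is indeed the paper's first step, but two parts of your account do not match the argument, and the second is a genuine gap.

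First, your picture of the recursion is inaccurate. The wreath-product reduction in the proof of Theorem \ref{random} lands in $\Sym(l)\wr\Sym(k/l)$ with $l$ as small as $2$, so one factor can stay as large as $n/2$: it is not true that ``$n$ is replaced by roughly $\sqrt n$'' and the recursion (in $m=\lceil\log_2 n\rceil$) has depth $O(\log n)$, not $O(\log\log n)$. Moreover, the diameter bound is not a per-level multiplicative factor; it enters exactly once, as the length of the word $v$ at the base of the construction. What accumulates multiplicatively at each level is the factor $C\log(n)^4$ coming from Lemma \ref{lem:commut} (see the recursion \eqref{eq:alpha_rec}), and it is this accumulation over $\Theta(\log n)$ levels, not $(n^C)^{O(\log\log n)}$, that the paper identifies as the dominant contribution. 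Your final bound is correct, but the reason you give for it is not.

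Second, and more seriously, you have not dealt with the word $v'$. In the proof of Theorem \ref{random} it handles clause v) of Theorem \ref{maroti} — primitive subgroups of order at most $\exp(C_1\log(n)^2)$ — and has length $\exp(C\log(n)^2)$, which already exceeds the target $n^{C\log\log n}$. So ``no other change to the argument is needed'' is false. Your suggested fix, invoking a general form of Babai's conjecture for non-abelian finite simple groups on these small primitive subgroups, does not go through as stated: those subgroups need not be simple, and even granting a diameter bound one would have to rerun the random-walk argument inside them with a suitable small target set, which you do not provide. The paper instead replaces Theorem \ref{maroti} by Theorem \ref{newthm}: outside the wreath-product cases, every element of a primitive group has a regular cycle, hence order at most $n$, so $v'$ can be taken of polynomial length. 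This substitution is the missing ingredient in your proposal.
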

The main obstruction to replacing the term $n^{\log\log n}$ with a
polynomial is not the (CFSG) part, for which the existing estimates
are far better than our needs, but in our handling of the transitive
imprimitive subgroups of ${\rm Sym}(n)$, which are wreath products of smaller
groups. See the proofs in Section \ref{sec:proof}.

\vspace{0.2cm}

The complexity of identities for associative rings has a long history and one of the outstanding results is the Amitsur-Levitzki theorem \cite{al} which says that the algebra $M_n(k)$ of $n\times n$-matrices of a field $k$ satisfies the identity
$$\sum_{\sigma \in {\rm Sym}(2n)} {\rm sgn}(\sigma) \cdot A_{\sigma(1)} \cdots A_{\sigma(2n)}=0,$$
for all $2n$-tuples of matrices $A_1,\dots,A_{2n} \in M_n(k)$. Moreover, the theorem asserts that this identity has the minimal degree among all non-trivial identities for $M_n(k)$. Thus, in this case the exact asymptotics of the smallest degree of non-trivial identities is known. It seems that we have still a long way to go to close the current gap between the lower and upper bound and determine the precise asymptotics of $\alpha(n)$.

\vspace{0.2cm}

Similar questions about the length of laws for groups were studied by Hadad \cite{MR2764921} for simple groups of Lie type and
 by Kassabov and Matucci in \cite{MR2784792} for arbitrary finite groups. The lengths of the shortest laws that hold in $n$-step solvable (resp.\ $n$-step nilpotent)  groups were estimated in \cite{thomelk}. Quantitative aspects of almost laws for compact groups (specifically for ${\rm SU}(n)$) were studied in \cites{convergent, thomelk}.

\vspace{0.2cm}

In Section \ref{lietype} we study the length of shortest laws in finite quasi-simple groups of Lie type. We point out a gap in a proof in \cite{MR2764921} and prove a weaker form of the upper bounds in Theorem 1 and Theorem 2 of \cite{MR2764921}.
Our main result asserts that any finite simple group $G$ of Lie type with rank $r$ and defined over a field with $q$ elements satisfies a non-trivial law $w_G \in {\mathbf F}_2$ of length $|w_G| \leq 12 \cdot q^{155\cdot r}$. For $G={\rm PGL}_n(q)$ with $q=p^s$ for some prime $p$, we get more precisely:
$$|w_G| \leq  12 \cdot \exp\left(2\sqrt{2} \cdot n^{1/2} \log(n)\right) \cdot q^{n-1}.$$

It remains to be an intruiging question to determine the asymptotics of the length of the shortest law in ${\rm PGL}_n(p)$ as $p$ increases, even for $n$ fixed or just in the case $n=3$. It is conceivable that our strategy for ${\rm Sym}(n)$ can be carried out for other families of finite simple groups, resulting in improved bounds -- see the remarks at the end of Section \ref{lietype}.

\section{Divisibility and group laws for \texorpdfstring{${\rm Sym}(n)$}{Sym(n)}}
\label{divlaw}
There is a direct relationship between the divisibility function on the free group and the existence of group laws on symmetric groups. 

\begin{lemma} \label{elem}
An element $w \in {\mathbf F}_2$ is a law for ${\rm Sym}(n)$ if and only if $D_{{\mathbf F}_2}(w)>n$.
\end{lemma}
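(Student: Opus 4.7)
The plan is to unravel both directions of the equivalence using the standard coset-action correspondence between finite-index subgroups of $\mathbf{F}_2$ and permutation representations.

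For the forward implication, assume $w$ is a law for $\Sym(n)$. I would take any subgroup $\Lambda \subset \mathbf{F}_2$ with $[\mathbf{F}_2:\Lambda]=m \leq n$ and consider the left action of $\mathbf{F}_2$ on the coset space $\mathbf{F}_2/\Lambda$. This yields a homomorphism $\pi\colon \mathbf{F}_2 \to \Sym(\mathbf{F}_2/\Lambda) \cong \Sym(m)$. Since $\Sym(m)$ embeds in $\Sym(n)$ via the natural inclusion fixing the last $n-m$ points, any law for $\Sym(n)$ is automatically a law for $\Sym(m)$. Hence $\pi(w)=e$, which forces $w$ to fix the trivial coset, i.e.\ $w\in\Lambda$. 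Since this holds for every $\Lambda$ of index at most $n$, we conclude $D_{\mathbf{F}_2}(w)>n$.

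For the reverse implication, I would argue by contrapositive: assuming $w$ is not a law for $\Sym(n)$, I produce a finite-index subgroup of index at most $n$ that avoids $w$. Choose $g,h\in\Sym(n)$ with $w(g,h)\neq e$, and let $\varphi\colon \mathbf{F}_2 \to \Sym(n)$ be the unique homomorphism sending the two free generators to $g$ and $h$. Pick a point $i\in\{1,\dots,n\}$ that is moved by $\varphi(w)$, and set $\Lambda := \varphi^{-1}(\operatorname{Stab}(i))$. Then $[\mathbf{F}_2:\Lambda] \leq [\Sym(n):\operatorname{Stab}(i)] = n$ while $w\notin\Lambda$, hence $D_{\mathbf{F}_2}(w)\leq n$.

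Neither direction contains a real obstacle; the only point that requires a moment's care is the harmless observation in the forward direction that a law for $\Sym(n)$ remains a law for every $\Sym(m)$ with $m\leq n$, which is immediate from the embedding $\Sym(m)\hookrightarrow\Sym(n)$. I would present both directions inline in a few lines, without any further setup.
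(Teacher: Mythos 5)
Your proof is correct and follows the same route as the paper: identify subgroups of index $\le n$ with transitive coset actions on $\le n$ points to get the forward direction, and pull back a point stabilizer along the homomorphism determined by $(g,h)$ to get the reverse. The only cosmetic difference is that you spell out the embedding $\Sym(m)\hookrightarrow\Sym(n)$, which the paper leaves implicit.
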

\begin{proof}
Let $w$ be a law for ${\rm Sym}(n)$. Consider a subgroup $\Lambda \subset {\mathbf F}_2$ of index less than or equal $n$. We obtain a natural permutation action of ${\mathbf F}_2$ on the space of cosets ${\mathbf F}_2/\Lambda$ and $|{\mathbf F}_2/\Lambda| \leq n$. By assumption, $w$ fixes every point in ${\mathbf F}_2/\Lambda$. In particular, $w \Lambda = \Lambda$ and hence $w \in \Lambda$. Suppose now that $w$ is not a law for ${\rm Sym}(n)$. Then, there exists permutations $\sigma,\tau \in {\rm Sym}(n)$ and $w(\sigma,\tau) \neq 1_n$. Let us take some point $k \in \{1,\dots,n\}$ such that $w(\sigma,\tau)(k) \neq k$. We set 
$\Lambda := \{v \in {\mathbf F}_2 \mid v(\sigma,\tau)(k)=k \}.$
Then, $\Lambda \subset {\mathbf F}_2$ is a subgroup of index less than or equal $n$ and $w \not \in \Lambda$. In particular, we get $D_{{\mathbf F}_2}(w) \leq n$. This finishes the proof.
\end{proof}

In order to establish estimates like the one in \eqref{mr} or \eqref{randomnew}, it remains to construct laws for ${\rm Sym}(n)$ whose length is as short as possible.

The most common strategy to construct laws for $\Gamma$ is to construct first a list of words $w_1,\dots,w_k$ in ${\mathbf F}_2$ so that for each pair $(g,h) \in \Gamma^2$, we have $w_i(g,h)=e$ for at least one $1 \leq i \leq k$. After this is done, the words are combined using an iterated nested commutator. The following lemma takes care of the second step, similar constructions appeared in \cite[Lemma 3.3]{MR2764921} and \cite[Lemma 10]{MR2784792}.

\begin{lemma}\label{lem:commut}
Let $v_1,\dotsc,v_m$ be non-trivial words in $\mathbf{F}_k$, $k\ge 2$. Then
there exists a non-trivial word $w\in\mathbf{F}_k$ that trivializes
the union of $k$-tuples trivialized by the $v_i$. 
In other words, for every $i\in\{1,\dotsc,m\}$ and $\sigma_1,\cdots,\sigma_k$
in some group, $v_i(\sigma_1,\dots,\sigma_k)=1\Rightarrow w(\sigma_1,\dots,\sigma_k)=1$.
Moreover, the length of $w$ is
bounded by $16\cdot m^2\max|v_i|$.
\end{lemma}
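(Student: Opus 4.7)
The plan is to construct $w$ as a balanced binary tree of iterated commutators with short conjugators inserted at each internal node, since in a free group the commutator is essentially the only operation that preserves triviality of its arguments under substitution. I would first pad the list with repeats of $v_1$ so that $\tilde m := 2^d \le 2m$ is a power of two, set the leaves $w_i^{(0)} := v_i$, and then for $k = 1, \dotsc, d$ and $j = 1, \dotsc, \tilde m/2^k$ define recursively
\[
w_j^{(k)} := \bigl[\, w_{2j-1}^{(k-1)},\ x_{j,k}\, w_{2j}^{(k-1)}\, x_{j,k}^{-1} \,\bigr]
\]
with a conjugator $x_{j,k} \in \mathbf{F}_k$ of length at most one, to be specified. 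The output is $w := w_1^{(d)}$.

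The vanishing property is then immediate by induction on the level: if some $v_i$ evaluates to $1$ in a given group, the corresponding leaf is trivial, and since $[1,Y] = [X,1] = 1$ this triviality propagates all the way to the root. For the length estimate, the recursion $|w_j^{(k)}| \le 2|w_{2j-1}^{(k-1)}| + 2|w_{2j}^{(k-1)}| + 4|x_{j,k}|$ combined with $|x_{j,k}| \le 1$ yields $M_k \le 4M_{k-1} + 4$ for $M_k := \max_j |w_j^{(k)}|$, which solves to $M_d = O(4^d L)$ with $L := \max_i |v_i|$. Since $4^d = \tilde m^2 \le 4m^2$, this comfortably fits under the target bound $16 m^2 L$.

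The main obstacle is ensuring that $w \ne 1$ as an element of $\mathbf{F}_k$, and this is precisely where the conjugators are needed. Fix two distinct generators $a, b \in \mathbf{F}_k$ and proceed up the tree by induction: assuming $w_{2j-1}^{(k-1)}$ and $w_{2j}^{(k-1)}$ are non-trivial, I would pick $x_{j,k}$ from $\{1, a, b\}$ so as to make $w_j^{(k)} \ne 1$. In a free group the centraliser of a non-trivial element $A$ is the cyclic subgroup $\langle A_0 \rangle$ generated by its primitive root, and no non-trivial element is conjugate to its own inverse; hence the set of bad $x$ (those for which $[A,\, xBx^{-1}] = 1$) is either empty or exactly one coset of $\langle A_0 \rangle$. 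If $x = 1$ is good, we are done; otherwise the bad coset equals $\langle A_0 \rangle$ itself, and since $A_0$ cannot be simultaneously a power of $a$ and a power of $b$ (uniqueness of primitive roots in a free group), at least one of $a, b$ lies outside $\langle A_0 \rangle$ and serves as $x_{j,k}$. This completes the induction and hence the construction.
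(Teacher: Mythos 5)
Your proof is correct, and it reaches essentially the same balanced binary tree of iterated commutators as the paper, but it handles the key non-triviality issue by a genuinely different mechanism. The paper strengthens the induction hypothesis to require that every intermediate word be itself a (non-trivial) commutator, and then invokes Sch\"utzenberger's theorem that a non-trivial commutator in a free group is never a proper power: thus at each internal node the naive commutator $[w_1,w_2]$ can vanish only if $w_1=w_2^{\pm1}$, and in that degenerate case the two children trivialize the same tuples, so one simply keeps $w_1$ (no conjugator needed). You instead bypass Sch\"utzenberger entirely by conjugating one argument by an element of $\{1,a,b\}$ at each node and arguing, via the cyclicity of centralizers of non-trivial elements of $\mathbf{F}_k$ and the fact that no non-trivial element of a free group is conjugate to its inverse (biorderability), that the bad conjugators form at most one coset of the cyclic centralizer and hence miss one of $1,a,b$. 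Both arguments are sound and yield the same $O(m^2\max|v_i|)$ bound; the paper's route keeps the resulting word an honest iterated commutator, which is slightly cleaner, whereas yours is arguably more elementary, trading the specialized Sch\"utzenberger result for standard facts about commutation in free groups, at the harmless cost of decorating the tree with length-one conjugators.
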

\begin{proof}
We strengthen the requirements on $w$ to $|w|\le
4^{\lfloor\log_2(m-1)\rfloor+2}\max|v_i|$ (for $m\ge 2$, and
$4\max|v_i|$ for $m=1$) and we require that $w$ be a non-trivial
commutator. We now argue by induction on $m$. For $m=1$ we take
$w=[v_1,z]$ for some generator of $z$ of $\mathbf{F}_k$. Since a
commutator is trivial in the free group only when the two words are
powers of the same word \cite{mks}, we choose $z$ to be some
generator such that $v_1$ is not a power of $z$ and we are done.

For $m\ge 2$, we apply the lemma inductively on $v_1,\dotsc,v_{\lfloor
  m/2\rfloor}$ and on $v_{\lfloor m/2\rfloor +1},\dotsc,\linebreak[4]v_m$ and get
two non-trivial words $w_1$ and $w_2$ which together trivialize the
union of the $k$-tuples trivialized by the $v_i$, and
\[
|w_j|\le 4^{\lfloor \log_2(m-1)\rfloor+1}\max|v_i|\qquad \forall j\in\{1,2\}.
\]
Clearly $w=[w_1,w_2]$ trivializes the union of the $k$-tuples trivialized
by the $v_i$ and has the correct length. So we need only show that it
is not trivial. As above, $w$ can be trivial only if $w_1$ and $w_2$
are powers of the same word. But by induction $w_i$ are commutators
and hence cannot be non-trivial powers -- a result that goes back to Sch\"utzenberger \cite{MR0103219}. Hence $w$ can
be trivial only if $w_1=w_2^{\pm 1}$, but in this case $w_1$ and $w_2$
trivialize the same set of $k$-tuples and we can simply take
$w=w_1$ instead. This proves the lemma.
\end{proof}

Let us illustrate this lemma by giving a quick argument for
\eqref{oldbound}. Following Landau \cite{landau}, we denote the
largest order of an element in ${\rm Sym}(n)$ by $g(n)$. Hence the
words $a,a^2,\dotsc,a^{g(n)}\in\mathbf{F}_2$ trivialize all couples in $\Sym(n)$. (Of
course, these words are also in $\mathbf{F}_1$ but Lemma \ref{lem:commut} only
works in $\mathbf{F}_k$, $k\ge 2$ so we add a new variable.) We now apply
Lemma \ref{lem:commut} and get a non-trivial word $w\in\mathbf{F}_2$
with $|w|\le 16g(n)^3$ which trivializes $\Sym(n)$. 
Landau \cite{landau} showed that $g(n) \le \exp\left(C(n \log(n))^{1/2} \right)$ using the Prime Number Theorem. Hence, we can conclude that the length of the shortest law for ${\rm Sym}(n)$ satisfies \eqref{oldbound}.
\vspace{0.1cm}

Let us remark that this last argument demonstrates that the
requirement $k\ge 2$ in Lemma \ref{lem:commut} is really necessary:
even though all our words are in $\mathbf{F}_1$, any word in
$\mathbf{F}_1$ which trivializes all of $\Sym(n)$ must be $a^q$ where
$q$ is divisable by all primes smaller than $n$, which can be shown to
be much larger than \eqref{oldbound}, in fact exponential in $n$,
again by the Prime Number Theorem -- see Equation \eqref{pnt}.

We will use Lemma \ref{lem:commut} repeatedly in a similar way in the proof of Theorem \ref{random}.

\section{Random walks and mixing time}

In this section we recall some standard facts about the relationship between diameter, spectral gap, and mixing time for Cayley graphs of finite groups.

Let $G$ be a finite group and let $S$ be a finite and symmetric generating set. Recall, a set $S\subset G$ is called symmetric, if $S^{-1}:= \{s^{-1} \mid s\in S\}$ is equal to $S$. We consider the lazy random walk on $G$, which starts at the neutral element and whose transitions are defined as follows: $${\mathbb P}(\omega_{n+1} = g \mid \omega_n = h ) = \begin{cases} \frac12 & g=h \\
\frac1{2|S|} & g=sh \mbox{ for some $s \in S$}\end{cases}.$$
Consider the Hilbert space $\ell^2(G)$ with orthonormal basis $\{\delta_g \mid g \in G\}$ and the left-regular representation
$$\lambda \colon G \to {\rm U}(\ell^2(G)), \quad \lambda(g)\delta_h := \delta_{gh}.$$
We set
$$M_S := \frac{1}{2} + \frac1{2|S|}\sum_{g \in S} \lambda(s) \in B(\ell^2(G)),$$
and note that for any subset $E \subset G$,
$${\mathbb P}(\omega_n \in E) = \langle M_S^n(\delta_e),\chi_E \rangle,$$
where $\chi_E$ denotes the characteristic function of $E \subset G$. The properties of the random walk can be studied using the eigenvalues of the self-adjoint operator $M_S$, which we denote by
$$1 = \lambda_0 > \lambda_1(G,S) \geq \lambda_2(G,S) \cdots
\lambda_{|G|}(G,S)\geq 0,$$ 
where the last inequality is due to laziness. Note that the eigenspace for the eigenvalue $1$ is one-dimensional, since the Cayley graph is connected. The difference $1 - \lambda_1(G,S)$ is called the spectral gap of the random walk.
We denote the diameter of the Cayley graph associated with $S$ by 
$${\rm diam}(G,S) := \min\{n \in \N \mid \forall g \in G, \exists s_1,\dots,s_n \in S \cup \{e\} : g=s_1\cdots s_n \}.$$

We have the following relationship between the diameter of a graph and the spectral gap of the random walk, see \cite[Corollary 1]{MR1245303}.

\begin{equation} \label{specgap}
1 -\lambda_1(G,S) \geq  \frac{1}{{2|S| \cdot \rm diam}(G,S)^2}.
\end{equation}

We denote by $u \in \ell^2(G)$ the uniform distribution and note that \eqref{specgap} implies:
$$\left\| M^n(\delta_e) - u \right\|_2 \leq \lambda_1(G,S)^n \leq \left(1 - \frac{1}{2|S| \cdot {\rm diam}(G,S)^2}\right)^n.$$

The following proposition is well-known and uses this information about the speed of convergence to the uniform distribution to relate the diameter with the mixing time of the random walk.

\begin{proposition} \label{mixing}
Let $E \subset G$ be a subset and set $\alpha := |E|/|G|$. 
If $$n \geq 2|S| \cdot{\rm diam}(G,S)^2 \cdot \log\left(2|G| \right),$$
then ${\mathbb P}(\omega_n \in E) \geq \alpha/2.$
\end{proposition}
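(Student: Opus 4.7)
The plan is to compare the time-$n$ distribution with the uniform distribution in the $\ell^2(G)$ inner product, using the spectral decay already displayed just before the statement. First I would rewrite the two quantities in inner-product form: by definition
$$\mathbb{P}(\omega_n \in E) = \langle M_S^n \delta_e,\chi_E\rangle \qquad\text{and}\qquad \alpha = \tfrac{1}{|G|}\sum_{g\in E}1 = \langle u,\chi_E\rangle,$$
where $u$ is simultaneously the uniform measure and the orthogonal projection of $\delta_e$ onto the one-dimensional $\lambda_0=1$ eigenspace (spanned by the constants). Since $M_S u = u$, subtraction gives $\mathbb{P}(\omega_n\in E)-\alpha = \langle M_S^n\delta_e - u,\chi_E\rangle$.

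The core of the argument is then a single Cauchy--Schwarz step combined with the spectral bound already noted in the text, $\|M_S^n\delta_e - u\|_2 \le \lambda_1(G,S)^n$ (using $\|\delta_e-u\|_2\le 1$). This yields
$$\bigl|\mathbb{P}(\omega_n\in E)-\alpha\bigr| \;\le\; \lambda_1(G,S)^n\,\|\chi_E\|_2 \;=\; \lambda_1(G,S)^n \sqrt{|E|}.$$
To turn the gap estimate \eqref{specgap} into an exponential rate I would apply $\lambda_1^n \le \exp(-n(1-\lambda_1))$, valid on $[0,1)$, together with \eqref{specgap} to obtain
$$\lambda_1(G,S)^n \;\le\; \exp\!\left(-\frac{n}{2|S|\,\mathrm{diam}(G,S)^2}\right).$$
The hypothesis on $n$ then gives $\lambda_1(G,S)^n \le 1/(2|G|)$.

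Chaining these two bounds,
$$\bigl|\mathbb{P}(\omega_n\in E)-\alpha\bigr| \;\le\; \frac{\sqrt{|E|}}{2|G|} \;\le\; \frac{|E|}{2|G|} \;=\; \frac{\alpha}{2},$$
where the middle inequality uses $\sqrt{|E|}\le |E|$ for $|E|\ge 1$ (and the case $E=\emptyset$ is vacuous), whence $\mathbb{P}(\omega_n\in E)\ge \alpha/2$ as required. I do not expect any genuine obstacle; the only thing to calibrate is that the factor $\log(2|G|)$ (rather than $\log|G|$) in the hypothesis is precisely what lets us absorb both the constant $2$ coming from $\sqrt{|E|}\le|E|$ and the worst case $|E|=1$, for which the Cauchy--Schwarz loss is sharpest.
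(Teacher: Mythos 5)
Your proof is correct and follows essentially the same approach as the paper: rewrite the probability as $\langle M_S^n\delta_e,\chi_E\rangle$, compare with the stationary term $\langle u,\chi_E\rangle=\alpha$ via Cauchy--Schwarz, bound the $\ell^2$-error by the second eigenvalue and the spectral gap estimate \eqref{specgap}, and absorb the loss from $\|\chi_E\|_2=\sqrt{|E|}$ using $\sqrt{|E|}\le|E|$. The only difference is cosmetic: you spell out that $\|\delta_e-u\|_2\le 1$, a detail the paper leaves implicit.
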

\begin{proof} Using our assumption, we can compute
$$\left(1 - \frac{1}{2|S| \cdot{\rm diam}(G,S)^2} \right)^n \leq \exp\left(- \frac{n}{2|S| \cdot{\rm diam}(G,S)^2}\right) \leq \frac{1}{2|G|}.$$
and we find the following estimate.
\begin{eqnarray*}
{\mathbb P}(\omega_n \in E) &=&\langle M^n(\delta_e),\chi_E \rangle \\
&\geq& \langle u,\chi_E \rangle -\left(1 - \frac{1}{2|S| \cdot{\rm diam}(G,S)^2}\right)^n \cdot \|\chi_E\| \\
&=& \frac{|E|}{|G|}  -  \left(1 - \frac{1}{2|S| \cdot{\rm diam}(G,S)^2}\right)^n \cdot |E|^{1/2} \\
&\geq& \frac{|E|}{|G|} - \frac{|E|^{1/2}}{2|G|}\\
& \geq& \alpha/2.
\end{eqnarray*}
This finishes the proof.
\end{proof}

\section{Proof of the main result}\label{sec:proof}

We wish to demonstrate Theorems \ref{random} and
\ref{thm:underBabai}. Recall that Theorem \ref{random} states that
%
there exists an absolute constant $C$ such that for each $n \ge 2$, there exists a non-trivial word $w_n \in {\mathbf F}_2$ of length
$$|w_n| \leq \exp\left(C \log(n)^4 \log \log(n) \right),$$
which is trivial whenever evaluated on ${\rm Sym}(n)$.
Theorem \ref{thm:underBabai} gets a better estimate under Babai's
conjecture.
The proof relies on a recent result of Helfgott-Seress \cite{seresshelfgott}, which gives new bound on the diameter of Cayley graphs of ${\rm Sym}(n)$, and a structure result about subgroups of ${\rm Sym}(n)$ obtained by Liebeck \cite{MR758332}. 

Let us first explain this result, which will allow us the perform a crucial induction step in our proof.

\begin{theorem}[Liebeck]\label{maroti}
Let $X$ be a finite set and let $\Gamma \subset {\rm Sym}(X)$ be a subgroup. Then, either
\begin{enumerate}
\item[i)] $\Gamma = {\rm Sym}(X)$ or $\Gamma={\rm Alt}(X)$,
\item[ii)] There exists two disjoint non-empty sets $Y$ and $Z$ such
  that $X\cong Y\cup Z$ and
  $\Gamma\hookrightarrow\Sym(Y)\times\Sym(Z)$.
\item[iii)] There exist two sets $Y$ and $Z$, each with at least two elements, such
  that $X\cong Y\times Z$ and $\Gamma\hookrightarrow
  \Sym(Y)\wr\Sym(Z)$ where the action of $\Sym(Z)$ on $\Sym(Y)^Z$
  implicit in the $\wr$ notation is the natural action.
\item[iv)] There exist two sets $Y$ and $Z$ as above and a $1 \leq k
  \leq |Y|$, such that $X \cong \binom{Y}{k}^Z$ such that $\Gamma$ is
  isomorphic to a group $\Gamma'$ with 
$${\rm Alt}(Y)^Z \subseteq \Gamma'  \subseteq {\rm Sym}(Y) \wr {\rm Sym}(Z),$$
\item[v)] the size of $\Gamma$ is bounded by $$|\Gamma| \leq \exp\left(C_1 \log(n)^2\right).$$
\end{enumerate}
\end{theorem}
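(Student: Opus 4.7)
The plan is to proceed by a trichotomy on the action of $\Gamma$ on $X$: intransitive, transitive but imprimitive, or primitive, corresponding respectively to cases (ii), (iii), and (the remaining cases (i), (iv), (v)).

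First, if $\Gamma$ fails to act transitively on $X$, choose any non-trivial $\Gamma$-orbit $Y \subsetneq X$ and let $Z := X \setminus Y$. Both $Y$ and $Z$ are $\Gamma$-invariant and non-empty, so $\Gamma \hookrightarrow \Sym(Y) \times \Sym(Z)$, which is case (ii). Next, suppose $\Gamma$ is transitive but imprimitive, and fix a non-trivial system of blocks. Let $Y$ be one block and $Z$ index the $\Gamma$-translates of $Y$, so that $X \cong Y \times Z$ as sets, with $|Y|, |Z| \geq 2$ by non-triviality of the system. The standard argument for imprimitive actions yields an embedding $\Gamma \hookrightarrow \Sym(Y) \wr \Sym(Z)$ with the natural action, placing us in case (iii).

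The substantial remaining case is when $\Gamma$ acts primitively on $X$, and here the proof is not elementary: we invoke Liebeck's bound on primitive permutation groups \cite{MR758332}, which rests on the O'Nan--Scott theorem together with the Classification of Finite Simple Groups. That result asserts that any primitive subgroup of $\Sym(n)$ either (a) contains $\Alt(n)$, giving case (i); or (b) is an almost-simple or diagonal/product-action group whose socle is $\Alt(Y)^Z$ acting on $\binom{Y}{k}^Z$ for some parameters $Y$, $Z$, $k$, giving case (iv); or (c) has order bounded by $n^{C \log n} = \exp(C \log(n)^2)$, which is case (v) after absorbing $C$ into $C_1$.

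The main obstacle is entirely concentrated in the third step: disposing of the primitive case requires going through all five O'Nan--Scott families (affine, almost simple, simple diagonal, product type, and twisted wreath) and showing that, apart from the exceptional large family described in case (iv), every primitive group has order at most $\exp(C \log(n)^2)$. We treat this as a black box rather than reproducing it, since the bound and its exceptional list are precisely what Liebeck's theorem provides; our only task is to verify that the statement he proves matches the case split (i)--(v) used above, and to check that the intransitive and imprimitive reductions feed correctly into the primitive case on each orbit or block stabilizer.
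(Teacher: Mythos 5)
Your proposal is correct and it matches the only route that makes sense: the paper itself does not prove Theorem~\ref{maroti}. It is stated as a citation to Liebeck \cite{MR758332}, with an accompanying remark pointing to Mar\'oti's sharpening, and no \texttt{proof} environment appears. Your elementary reduction of the intransitive case to (ii) via a proper orbit and its complement, and of the transitive imprimitive case to (iii) via a non-trivial block system, is the standard argument and is exactly what one would write out were the paper to include a proof; the primitive case is then, as you say, a black-box appeal to Liebeck's bound (which rests on O'Nan--Scott together with CFSG).

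One small inaccuracy worth flagging, since it concerns how the O'Nan--Scott types distribute across (i), (iv), (v): you describe the exceptional family (iv) as ``almost-simple or diagonal/product-action group whose socle is $\Alt(Y)^Z$''. Case (iv) as stated in the theorem is really only the groups with socle $\Alt(Y)^Z$ acting on $\binom{Y}{k}^Z$ (the ``large'' product-action and, when $|Z|=1$, almost-simple-on-$k$-sets families). Primitive groups of simple diagonal type, twisted wreath type, affine type, and almost simple groups with non-alternating socle do not fit the form in (iv); they must be absorbed into (v), and indeed their orders are polynomially bounded in $n$ (diagonal) or at most $\exp(C\log(n)^2)$ (affine), so they do fall under clause (v). Since you quote Liebeck as a black box anyway, this misstatement does not propagate into an error in the logic, but the description of his result should be adjusted so that only the alternating-socle product-action groups are routed to (iv) and all other non-giant primitive types are routed to (v).
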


Case ii) corresponds to the non-transitive case and every element of
$\Gamma$ preserves the sets $Y$ and $Z$. Case iii) corresponds to the
transitive imprimitive case and each element of $\Gamma$ preserves the
division of $X$ into copies of $Y$. To explain case iv), denote an
element of $X$ by $(y_{i,j})$ where $i\in \{1,\dotsc,|Z|\}$ and $j\in\{1,\dotsc,k\}$.
Then an element $(\sigma_1,\dotsc,\sigma_{|Z|})$ of $\Sym(Y)^Z$ acts on $X$
by $(y_{i,j})\mapsto (y_{i,\sigma_i(j)})$
while the elements of $\Sym(Z)$ act on the $i$ coordinates.
%

\begin{remark}Let us mention that the theorem above has been optimized my Mar\'oti in \cite{MR1943938}, giving a constant in clause v) equal to $4$
with four exceptions, the Matthieu groups $M_{11},M_{12}, M_{23}$
and $M_{24}$.
\end{remark}
Another version of Liebeck's result is the following, achieved in
\cites{maxorder, maxorder2} and \cite{maxorder3}, making heavy use of
CFSG. This version is necessary for our Theorem \ref{thm:underBabai}
and could have been used to simplify the proof of our main Theorem
\ref{random}. Since its proof is not fully published at this time, we
will prove Theorem \ref{random} using Theorem \ref{maroti} and use
Theorem \ref{newthm} only for our Theorem \ref{thm:underBabai}.
To state the theorem, let us say that a permutation $\sigma \in {\rm
  Sym}(n)$ has a {\it regular} cycle, if it admits a cycle of length
equal to its order.

\begin{theorem} \label{newthm}
Let $X$ be a finite set and $G \subset {\rm Sym}(X)$ be a primitive permutation group. Then, either 
\begin{enumerate}
\item[i)] there exists finite sets $Y,Z$, $1 \leq k \leq |Y|$, and an isomorphism
$X \cong \binom{Y}{k}^Z$ such that $${\rm Alt}(Y)^Z \subseteq \Gamma \subseteq {\rm Sym}(Y) \wr {\rm Sym}(Z),$$ or
\item[ii)] every element in $G$ contains a regular cycle.
\end{enumerate}
\end{theorem}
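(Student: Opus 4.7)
The plan is to invoke the O'Nan–Scott theorem, which classifies finite primitive permutation groups into a handful of structural types (affine, almost simple, simple and compound diagonal, product action, twisted wreath). Case (i) of the theorem matches precisely the primitive groups of product–action type whose socle is $\Alt(Y)^Z$, acting on $\binom{Y}{k}^Z$ via the natural product of $k$-subset actions. So the task reduces to showing that in every other O'Nan–Scott class, every element of $G$ must contain a regular cycle, that is, a cycle whose length equals the order of the element.

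First I would dispatch the affine, diagonal and twisted-wreath types by direct structural arguments. For an affine group $G = V \rtimes G_0$ acting on $V$, the rational canonical form of $g_0$ on $V$ over $\F_p$, combined with a translation correction, produces an orbit whose length matches the order of $(v,g_0)$. The simple and compound diagonal types and the twisted wreath type have very explicit point stabilisers, and the cycle structure of an element projects nicely onto the socle factors; a short computation produces a regular cycle in each case because each simple factor is acting almost regularly.

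The heart of the proof, and the main obstacle, is the almost simple case, together with those product–action cases whose socle is not of alternating type. Here one must go through CFSG family by family. For socle $\Alt(m)$ I would use the cycle structures of permutations on $\{1,\dots,m\}$ combined with the classification of primitive actions of alternating groups; the product actions on $\binom{Y}{k}^Z$ are precisely the exceptions absorbed into case (i). For socle of Lie type, the plan is to combine Aschbacher's classification of maximal subgroups with information on Weyl–group cycle structure (via Steinberg endomorphisms and Shintani descent) and the derangement estimates underlying the Boston–Shalev conjecture, so as to exhibit for each element a cycle of length equal to its order. The sporadic and small-rank exceptional cases would be handled by direct computation.

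This case-by-case verification, carried out in \cite{maxorder}, \cite{maxorder2}, \cite{maxorder3}, is the deep and unavoidable part of the argument, as it genuinely requires the full strength of CFSG together with detailed structural information about maximal subgroups and element orders in finite simple groups. It is precisely for this reason that the authors choose to bypass Theorem~\ref{newthm} and instead use Liebeck's Theorem~\ref{maroti} — whose proof, while still CFSG-based, is completely published — in the proof of Theorem~\ref{random}.
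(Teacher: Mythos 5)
The paper does not actually prove Theorem~\ref{newthm}: it states it as a result achieved in \cite{maxorder}, \cite{maxorder2} and \cite{maxorder3}, remarking explicitly that ``its proof is not fully published at this time'' and that this is why Theorem~\ref{random} is proved via Theorem~\ref{maroti} instead. Your proposal is therefore not being compared against a proof in the paper, but against a citation, and you arrive at exactly the same place: your outline correctly identifies the O'Nan--Scott reduction, correctly recognises that the $\binom{Y}{k}^Z$ product-action (and, when $|Z|=1$, $k$-subset) groups with alternating-type socle are precisely what is siphoned off into case~(i), correctly flags the almost-simple classes of Lie type as the genuinely CFSG-dependent bottleneck, and then defers the decisive case analysis to the very same three references. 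So you are reproducing the authors' own framing rather than giving a genuinely alternative argument. One caution worth recording: the sentences dispatching the affine, diagonal and twisted-wreath types by ``direct structural arguments'' and a ``short computation'' gloss over real work --- in particular, for an affine group $V\rtimes G_0$ the rational-canonical-form heuristic for $g_0$ alone does not immediately yield a cycle of length $\mathrm{ord}(v,g_0)$ when $g_0$ has eigenvalue $1$ (so that $g_0-1$ is singular and the translation part cannot be conjugated away), and the diagonal and twisted-wreath cases in \cite{maxorder2} are not trivial. These are not the CFSG-heavy cases, but they are also not one-line observations, so a genuine proof would need to fill them in rather than assert them.
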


\begin{remark}
Note that the first case in Theorem \ref{newthm} includes the case $k=|Z|=1$, which corresponds to case i) in Theorem \ref{maroti}. The only consequence of Theorem \ref{newthm} that we are going to use is that in Case ii), the order of all elements is bounded by $|X|$.
\end{remark}

Let us now start with the proof of Theorem \ref{random}.
\begin{proof}
As the result is asymptotic we will assume $n$ is sufficiently large. We first want to construct a non-trivial word $v_n \in {\mathbf F}_2$, such that $v_n(\sigma,\tau)=1_n$ whenever the group generated by $\sigma,\tau$ is isomorphic to ${\rm Sym}(k)$ or ${\rm Alt}(k)$ for some $k \leq n$. For simplicity, let us restrict to the case ${\rm Sym}(k)$. We denote by $P(k)$ the set of $k$-cycles in ${\rm Sym}(k)$ and note that $|P(k)|/|{\rm Sym}(k)| = 1/k$.


Let $(\sigma,\tau) \in {\rm Sym}(k)^2$ be a pair of generators and let us assume that $k \leq n$. By the main result in Helfgott-Seress \cite{seresshelfgott}, the diameter of the Cayley graph associated with the set $\{\sigma,\tau,\sigma^{-1},\tau^{-1} \}$ is at most 
$$\exp\left(C  \log(k)^4 \log\log(k) \right),$$
for some universal constant $C>0$.
Thus by Proposition \ref{mixing}, there exists $C>0$, such that a randomly choosen word (according to the standard lazy random walk on ${\mathbf F}_2$) at time 
$$4 \cdot \exp(C  \log(n)^4 \log\log(n))^2  \cdot \log\left(2|{\rm Sym}(n)| \right)$$ will lie in the set $P(k)$ with probability at least $1/2n$.
Now, if we take $8 n^2 \log(n)$ words at the same time independently at random, then the event that none of these words satisfies $w(\sigma,\tau) \in P(k)$ has probability at most
$$(1-1/{2n})^{8n^2 \log(n)} \leq \exp({-4n \ln(n)}).$$

We now consider all $k$ with $k \leq n$ at the same time and note that
there are at most $(n!)^2 \leq \exp(2 n \log n)$ pairs of permutations
that generate ${\rm Sym}(k)$ for some $k \leq n$. Hence, the probability that there exists $k$ with $k \leq n$ and a pair $(\sigma,\tau) \in {\rm Sym}(k)$ that generates ${\rm Sym}(k)$ so that for all of the $8 n^2 \log(n)$ independently choosen words $w$, we have $w(\sigma,\tau) \not \in P(k)$ is less than  $$\exp(- 4  n \log(n)) \cdot \exp(2 n \log n) = \exp(-2 n \log(n))< 1.$$

Hence, there exists a set $W \subset {\mathbf F}_2$ consisting of at most $8n^2 \log(n)$ words of length at most 
$$4 \cdot \exp(C  \log(n)^4 \log\log(n))^2  \cdot \log\left(2|{\rm Sym}(n)| \right) \leq \exp(C'  \log(n)^4 \log\log(n))$$ such that for all $k$ with $k \leq n$ and all pairs $(\sigma,\tau) \in {\rm Sym}(k)^2$ that generate ${\rm Sym}(k)$ at least one word $w \in W$ satisfies $w(\sigma,\tau) \in P(k)$.
Note also that since $P(k)$ does not contain the trivial permutation and $w(\sigma,\tau) \in P(k)$, we have that the $e \not \in W$, where $e$ denotes the neutral element of ${\mathbf F}_2$.

Now, by construction, the order of any element in $P(k)$ is $k \leq n$. We consider the set
$$W' := \left\{w^k \mid w \in W, 1 \leq k \leq n \right\}$$
which consists only of non-trivial words, since ${\mathbf F}_2$ is torsionfree. For each $k$ with $k \leq n$ and any pair $(\sigma,\tau) \in {\rm Sym}(k)$ that generates ${\rm Sym}(k)$, we have $w(\sigma,\tau)=1_k$ for at least one $w \in W'$.

Moreover, the length of elements in $W'$ is bounded by
$$n \cdot \exp(C  \log(n)^4 \log \log(n)^2) \leq \exp(C'  \log(n)^4 \log \log(n)),$$ 
and the number of elements in $W'$ is bounded by
$8n^3 \log(n).$ By Lemma \ref{lem:commut} there is a word $v$ which
trivializes all couples $\sigma,\tau$ in ${\rm Sym}(k)$ which generate either $\Sym(k)$
or $\Alt(k)$ for some $k\le n$, and
\[
|v|\le 4(8n^3\log n)^2\exp(C\log(n)^4\log\log(n))\le
\exp(C'\log(n)^4\log\log(n)).
\]


To finish the proof we need to deal with the case that $(\sigma,\tau)$
generates an arbitrary subgroup. In order to do so we will make use of
Theorem \ref{maroti}. Let us first construct the candidate word $x \in {\mathbf F}_2$,
and then prove its properties. The construction is as follows. We use
induction to construct non-trivial words $x_i$, for all $i$ with $i<\log_2(n)$, which are laws for
$\Sym(2^i)$ of minimal length. We wish to compose $x_i$ and $x_j$, but there is nothing
to guarantee that the result is non-trivial. We therefore embed
$\mathbf{F}_2=\langle a,b\rangle$ into $\mathbf{F}_4=\langle
a,b,c,d\rangle$ and define $x_i^{(1)}=[x_i,c]$ and $x_i^{(2)}[x_i,d]$,
and define 
\[
y_{i,j}'(a,b,c,d)=x_i(x_j^{(1)}(a,b,c,d),x_j^{(2)}(a,b,c,d)).
\]
$y_{i,j}'$ is clearly non-trivial --- in fact, it is reduced as
written. We embed ${\mathbf F}_4$ back in ${\mathbf F}_2$ via
${\mathbf F}_4 = \langle a, bab^{-1}, b^2ab^{-2}, b^3ab^{-3} \rangle
\subset {\mathbf F}_2$, and thus define
$$y_{i,j}(a,b) := y'_{i,j}(a,bab^{-1},b^2ab^{-2},b^3ab^{-3}),$$
which is non-trivial. 

Finally, let $v'$ be a word of length $\exp(C\log(n)^2)$ which
trivializes any element of order less than or equal $\exp(C_1\log(n)^2)$ where $C_1$
is from clause v) in Theorem \ref{maroti}, constructed using Lemma \ref{lem:commut}. To get our candidate $x$, we apply Lemma
\ref{lem:commut} again, this time to the words
\begin{equation}\label{eq:law_induction}
\{v,v'\}\cup\{y_{i,j}:1\le i, j<\log_2(n), 2^{i+j} < 4n\},
\end{equation}
where $v$ was the word constructed in the previous step. The
resulting word is our candidate $x$. We need to show it is indeed a
law for $\Sym(n)$ and to estimate its length.

We first show that $x$ is a law for ${\rm Sym}(n)$.
Indeed, consider a pair $(\sigma,\tau) \in {\rm Sym}(n)$ and denote
the subgroup that the pair generates by $\Gamma \subseteq {\rm
  Sym}(n)$. According to Theorem \ref{maroti}, there are five cases to
consider. Let us first despose of case ii), the case that $\Gamma$ is
non-transitive. In this case we note that if $x$ trivializes the
restrictions of $\sigma$ and $\tau$ to every orbit of $\Gamma$, then
it trivializes $(\sigma,\tau)$. Hence it is enough to show that
$x(\sigma,\tau)=1_k$ for every $\sigma$ and $\tau$ that generate a
transitive subgroup of $\Sym(k)$ for some $k\le n$. This leaves four
cases to verify.

Case i): $\Gamma = {\rm Sym}(k)$ or $\Gamma={\rm Alt}(k)$. In this case the word $v$ vanishes on $(\sigma,\tau)$. Since $x$ is defined to be an iterated commutator involving $v$, it will vanish as well.

Case iii): The action is transitive and imprimitive. In this case the
pair $(\sigma,\tau)$ generates a subgroup of ${\rm Sym}(l) \wr {\rm
  Sym}(k/l)$ for some $l \neq 1$. Let $i\ge 1$ be the smallest number such
that $2^i\ge l$ and $j\ge 1$ the smallest such that $2^j\ge k/l$. To
show that $i$ and $j$ satisfy the requirements, note that
$l,k/l\le n/2$ and hence $i,j<\log_2(n)$ and that $2^{i+j}< (2l)\cdot (2k/l)=4k\le 4n$ so $y_{i,j}$ is one of the
words participating in the construction of $x$. By construction, the word $y_{i,j}$ vanishes on the pair $(\sigma,\tau)$. Indeed, the wreath product admits an extension
$$1 \to {\rm Sym}(l)^{k/l} \to {\rm Sym}(l) \wr {\rm Sym}(k/l)
\stackrel{\pi}{\to} {\rm Sym}(k/l) \to 1.$$ 
Since $x_j$ is a law of $\Sym(k/l)$ we get that
$x_j^{(1/2)}(\alpha,\beta,\gamma,\delta)$ is in the kernel of the extension from
above for any $\alpha,\beta,\gamma,\delta\in\Sym(l)\wr\Sym(k)$. Since $x_i$ is a law for
$\Sym(k)$, we get that $y_{i,j}'(\alpha,\beta,\gamma,\delta)=1_k$ and hence also
$y_{i,j}(\sigma,\tau)$ and $x(\sigma,\tau)$. 

Case iv): In this case the group $\Gamma$ is also contained in wreath product ${\rm Sym}(r) \wr {\rm Sym}(s)$, with
$n=\binom{r}{q}^s$ for some $1 \leq q < r$ and $s,r \geq 2$.  This
implies that $r \leq \sqrt{n}$ and $s \leq \log_2(n)$. For $n$
sufficiently large we can find $i$ and $j$ as in case iii), and we are
done.


Case v): In this case, the group has cardinality bounded by $\exp(C_1 \log(n)^2)$. By construction of $v'$, it vanished on all elements whose order is at most $\exp(C_1 \log(n)^2)$. Hence, the word $x$ vanishes on $\Gamma$.


\vspace{0.1cm}

This finishes the case study and we conclude that indeed, $x$ is a non-trivial
law for ${\rm Sym}(n)$. It remains to bound its length. The number of
terms in \eqref{eq:law_induction} is, up to constants, $\log(
n)^2$. Their lengths satisfy $|v|\le\exp(C\log(n)^4\log\log(n))$,
$|v'|\le\exp(C\log(n)^2)$ and $|y_{i,j}|\le C\alpha(2^i)\alpha(2^j)$
where $\alpha(n)$ is, as before, the length of shortest identities in
${\mathbf F}_2$ that hold for ${\rm Sym}(n)$. Hence
lemma \ref{lem:commut} gives
\begin{equation}\label{eq:alpha_rec}
\alpha(n)\le |x|\le C\log(n)^4\max\left\{\exp(C\log(n)^4\log\log(n)),\max_{i,j}\alpha(2^i)\alpha(2^j) \right\}.
\end{equation}
The theorem is thus proved, given the elementary analysis of
\eqref{eq:alpha_rec} done in Lemma \ref{lem:anal_rec} below.
\end{proof}

\begin{lemma}\label{lem:anal_rec}
A series $\alpha(n)$ satisfying the recursion \eqref{eq:alpha_rec}
satisfies
\[
\alpha(n)< \exp(C\log(n)^4\log\log(n)).
\]
\end{lemma}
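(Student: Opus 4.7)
The plan is to take logarithms of \eqref{eq:alpha_rec} and run a dyadic induction. Define $F(L) := \log\alpha(2^L)$ for integers $L \ge 1$. The two constraints on the pairs $(i,j)$ appearing in \eqref{eq:alpha_rec}, namely $i, j < \log_2 n$ and $2^{i+j} < 4n$, translate (with $L = \lceil\log_2 n\rceil$) to membership in the polytope
\[
\mathcal I(L) := \{(i,j) \in \Z^2 : 1 \le i,j \le L-1,\ i+j \le L+1\},
\]
and the recursion becomes
\[
F(L) \le O(\log L) + \max\!\left\{C L^4 \log L,\ \max_{(i,j)\in \mathcal I(L)}\bigl(F(i)+F(j)\bigr)\right\}.
\]
The goal, after taking logs of the target bound, is to show $F(L) \le A L^4 \log L$ for some absolute constant $A$ and all $L$ large.

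I would prove this by induction on $L$, enlarging $A$ at the end to absorb the finitely many small base cases. Assume the bound for all indices $<L$. Since $\log i,\log j \le \log L$, the inductive hypothesis gives $F(i) + F(j) \le A(i^4 + j^4) \log L$. The function $(i,j) \mapsto i^4+j^4$ is convex, so its maximum on $\mathcal I(L)$ is attained at an extreme point; a direct check of the vertices $(1,1),(1,L-1),(2,L-1),(L-1,2),(L-1,1)$ yields
\[
\max_{(i,j)\in\mathcal I(L)}(i^4 + j^4) \;=\; (L-1)^4 + 16 \;=\; L^4 - 4L^3 + O(L^2).
\]
Taking $A \ge C$, the inductive step then gives
\[
F(L) \;\le\; O(\log L) + A\bigl(L^4 - 4L^3 + O(L^2)\bigr)\log L \;\le\; A L^4 \log L,
\]
provided $L$ is large enough that the gained $4A L^3 \log L$ term absorbs both the additive $O(\log L)$ and the quadratic error $O(A L^2 \log L)$. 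Translating back via $L \le \log_2 n + 1$ gives $\alpha(n) \le \exp(A(\log n)^4 \log\log n)$, as desired.

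The only conceptual step, and the main (if mild) obstacle, is recognizing where the recursion has slack: because the constraints $i, j \le L - 1$ are strict, the worst case is $\{i,j\} = \{2, L-1\}$ rather than something with $\max(i,j) = L$, and the resulting $-4L^3 \log L$ cushion is precisely what closes the induction against the additive $O(\log L)$ overhead and the competing $C L^4 \log L$ term inside the maximum. Everything else is a routine estimate.
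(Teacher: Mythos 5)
Your proof is correct and takes essentially the same route as the paper's: pass to the dyadic scale, induct on the ansatz $\exp(Km^4\log m)$, and exploit the slack in $i^4+j^4<m^4$ coming from the constraints $i,j\le m-1$ and $i+j\le m+1$ to absorb the $O(\log m)$ overhead and the competing $Cm^4\log m$ term. The only cosmetic difference is that you locate the extremal pair $(2,L-1)$ via convexity and get a cushion of order $L^3\log L$, whereas the paper settles for the cruder but equally sufficient estimate $i^4+j^4\le m^4-m^2$ obtained by a manual case split on $j$.
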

\begin{proof}Because of the powers of $2$ appearing in
  \eqref{eq:alpha_rec}, it is natural to define $m$ by
  $n\in(2^{m-1},2^m]$ and show that 
$\alpha(n)\le\exp(Cm^4\log m)$.
Let $K$ be some parameter. We will show that if $K$ is chosen
appropriately (sufficiently large), the inequality 
$\alpha(n)\le\exp(Km^4\log m)$ carries over by induction over
$m$. Assume therefore it holds inductively and examine the term 
$\max_{i,j}\alpha(2^i)\alpha(2^j)$ appearing in
\eqref{eq:alpha_rec}. Recall that the maximum is taken over $i,j<\log_2
(n)$, i.e., $i,j< m$, and such that $2^{i+j}< 4n$ i.e.\ $i+j\le
m+1$. We now claim that under these restrictions there exists some
$m_0$ such that
\begin{equation}\label{eq:infi}
i^4 + j^4 \le m^4 - m^2 \qquad\forall m\ge m_0.
\end{equation}
To see \eqref{eq:infi}, assume without loss of generality that
$i+j=m+1$ and that $i\ge j$ (so that in particular $i>m/2$) and write
\[
i^4+j^4\le (i+j)^4-4i^3j= (m+1)^4 - 4i^3j \le m^4 + 15m^3 - 4i^3j <
m^4 + m^3(15-j/2).
\]
and see that \eqref{eq:infi} holds whenever $j\ge 32$ (regardless of
$m$). For $j< 32$ we use that $i<m$ to get that \eqref{eq:infi} holds
whenever $m>m_0$ for some $m_0$ sufficiently large. 

Now insert \eqref{eq:infi} and the induction hypothesis into \eqref{eq:alpha_rec} and get
\begin{align*}
\log\alpha(n)&\le C+4\log(m)+\max\{Cm^4\log
(m),\max_{i,j}\log\alpha(2^i)+\log\alpha(2^j)\}\\
& \le C+4\log(m)+\max\{Cm^4\log(m),\max_{i,j}Ki^4\log(i)+Kj^4\log(j)\}\\
&\le C+4\log(m)+\max\{Cm^4\log(m),\max_{i,j}(i^4+j^4)\cdot K\log(m)\}\\
&\stackrel{\smash{\textrm{\eqref{eq:infi}}}}{\le}
 C+4\log(m)+\max\{Cm^4\log(m),K(m^4-m^2)\log(m)\}
\end{align*}
whenever $m$ is sufficiently large. Taking $K$ bigger than the
absolute constants $C$ appearing in the last stage, we see that for
$m>m_0$, we may conclude that $\alpha(n)\le
\exp(Km^4\log(m))$ from our induction hypothesis. Increase $K$, if
necessary, so that $\alpha(n)\le \exp(Km^4\log(m))$ for all $m\le
m_0$. With this value of $K$ our induction works and the lemma is proved.
\end{proof}

\begin{proof}[Proof of Theorem \ref{thm:underBabai}]The proof is very
  similar to that of Theorem \ref{random} so we will be brief. Recall
  how \eqref{eq:alpha_rec} was achieved: the term
  $\exp(C\log(n)^4\log\log(n))$ was the length of the word $v$ (see
  the discussion before \eqref{eq:law_induction}) which was the product of the worst diameter
  for any Cayley graph of $\Sym(n)$ with any generators; and several polynomial factors (an $n\log n$ for the difference
  between the square of the diameter and the mixing time, an $n$ because the
  random words need to be taken to a power, and an $n^6\log(n)^2$ for
  the number of random words needed to ensure all generators are
  accounted for). Under Babai's conjecture $\exp(C\log(n)^4\log\log(n))$ can be replaced by
  $n^C$. Further, there was the length of the word $v'$ which was
  $\exp(C\log(n)^2)$ that came from clause v) in Theorem
  \ref{maroti}. Replacing Theorem \ref{maroti} with Theorem
  \ref{newthm} we see that since all remaining cases have order at most $
  n$, the $\exp(C\log(n)^2)$ can also be replaced by a
  polynomial. All in all we get
\[
\alpha(n)\le C\log(n)^4 \max\left\{n^C,\max_{i,j}\alpha(2^i)\alpha(2^j)\right\}.
\]
A similar analysis will show that $\alpha(n)\le n^{C\log\log(n)}$ holds under
this restriction: the main term this time is not the $n^C$ that appears in
the maximum but the accumulation of the $\log(n)^4$  outside the
maximum, which one gets whenever moving from $n$ to $2n$. This finishes the outline of the proof.
\end{proof}

\section{Lower bounds for the divisibility function}

Lemma \ref{elem} allows us to convert the upper bound on the length of the shortest law for ${\rm Sym}(n)$ into a lower bound for divisibility function for ${\mathbf F}_2$. Hence, another way of interpreting our results in Theorem \ref{random} and Theorem \ref{thm:underBabai} is the following theorem.

\begin{theorem} \label{divthm}
The divisibility function for the free group ${\mathbf F}_2$ satisfies the lower bound
$$D_{{\mathbf F}_2}(n) \ge \exp\left(\left(\frac{\log(n)}{C\log \log(n)} \right)^{1/4} \right)$$
for some constant $C>0$.
Moreover, if Babai's Conjecture holds, then the divisibility function for the free group ${\mathbf F}_2$ satisfies the stronger lower bound
$$D_{{\mathbf F}_2}(n) \ge \exp\left(\frac{\log(n)}{C\log \log(n)}  \right) = n^{\frac{1}{C\log\log(n)}}.$$
\end{theorem}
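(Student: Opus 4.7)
The plan is to derive Theorem \ref{divthm} as an immediate corollary of Theorems \ref{random} and \ref{thm:underBabai} through Lemma \ref{elem}. Lemma \ref{elem} establishes the duality that a non-trivial $w \in \mathbf{F}_2$ is a law for $\mathrm{Sym}(n)$ if and only if $D_{\mathbf{F}_2}(w) > n$. Thus each law $w_n$ produced by Theorem \ref{random} is an explicit element of $\mathbf{F}_2$ of length at most $\exp(C\log(n)^4\log\log(n))$ with $D_{\mathbf{F}_2}(w_n) > n$, and analogously of length at most $\exp(C\log(n)\log\log(n))$ under Babai's conjecture by Theorem \ref{thm:underBabai}.

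The second step is to convert these pointwise statements into lower bounds on the function $N \mapsto D_{\mathbf{F}_2}(N)$, defined as the maximum of $D_{\mathbf{F}_2}$ over the $S$-ball of radius $N$. Given $N$, I would take $n$ to be the largest integer for which the length bound on $w_n$ is at most $N$. Then $w_n$ lies in the ball of radius $N$, so $D_{\mathbf{F}_2}(N) \geq D_{\mathbf{F}_2}(w_n) > n$, and it only remains to express $n$ as a function of $N$.

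For the unconditional bound I would invert the relation $\log N = C\log(n)^4\log\log(n)$. Because $\log\log$ varies very slowly, a routine check shows that $\log\log(n) = \Theta(\log\log(N))$, so the implicit equation closes and we may solve $\log(n) \geq (\log(N)/(C'\log\log(N)))^{1/4}$ after enlarging the constant to absorb the lower-order terms. The Babai-conditional case is the identical exercise applied to Theorem \ref{thm:underBabai}: inverting $\log N = C\log(n)\log\log(n)$ and using $\log\log(n) = \Theta(\log\log(N))$ yields $\log(n) \geq \log(N)/(C'\log\log(N))$, i.e., $D_{\mathbf{F}_2}(N) \geq N^{1/(C'\log\log N)}$.

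There is no real obstacle: Theorem \ref{divthm} is essentially a repackaging of the two earlier theorems via Lemma \ref{elem}. The only care needed is in the asymptotic inversion, where one must verify that the slowly-varying $\log\log$ factor transfers consistently between the $n$-scale (order of the symmetric group) and the $N$-scale (word length in $\mathbf{F}_2$); this affects only the value of the absolute constant $C$.
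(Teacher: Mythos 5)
Your proposal follows exactly the paper's argument: combine Lemma \ref{elem} with Theorems \ref{random} and \ref{thm:underBabai} to produce short words that lie in all subgroups of index $\le n$, then invert $\log N \asymp \log(n)^4\log\log(n)$ (respectively $\log N \asymp \log(n)\log\log(n)$) using the slow variation of $\log\log$. This matches the paper's proof, which fixes $n$, sets $k = \lfloor\exp(D\log(n)^4\log\log(n))\rfloor$, and carries out the same inversion.
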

\begin{proof}
Let $w \in {\mathbf F}_2$ be a shortest law for ${\rm Sym}(n)$ and set
$$k:= \lfloor \exp\left(D\log(n)^4 \log \log(n) \right) \rfloor.$$
We compute
\begin{eqnarray*}
\left(\frac{\log(k)}{\log \log(k)} \right)^{1/4} &\leq& \left(\frac{D\log(n) \log \log(n)}{\log\left( D\log(n)^4 \log \log(n) \right)} \right)^{1/4}\\
&=& \left(\frac{D\log(n)^4 \log \log(n)}{\log(D) + 4 \log \log(n) + \log \log \log(n)} \right)^{1/4}\\
&\leq& D/4 \cdot \log(n).
\end{eqnarray*}
Using the estimate from Theorem \ref{random} and Lemma \ref{elem} we get:
\[
D_{{\mathbf F}_2}(k) \geq  D_{{\mathbf F}_2}(w) > n 
\geq\exp\left( 4/D \cdot \left(\frac{\log(k)}{\log \log(k)} \right)^{1/4} \right).
\]
This proves the first claim. The second claim is proved in a similar way using Theorem \ref{thm:underBabai} instead of Theorem \ref{random}. This finishes the proof.
\end{proof}

Note that (at least assuming Babai's Conjecture) we are getting somewhat close to the linear upper bound for $D_{\mathbf F_2}(n)$, which explains in part why attempts have failed to improve this upper bound substantially.

\section{Finite simple groups of Lie type}
\label{lietype}
We now turn to laws satisfied for finite simple groups of Lie type, see \cite[Prop. 2.3.2]{MR1303592} or \cite[Sec. 13.1]{MR0407163} for a detailed discussion.
The proof of the upper bound on the minimal length of a law for such a
group in \cite[Theorem 2]{MR2764921} contains a gap, as we will now explain. In the proof of \cite[Lemma 3.1]{MR2764921}, it is assumed that a polynomial of degree $n$ has a splitting field of degree less than or equal $n$.  This is not correct and a more detailed analysis is needed to overcome this problem. Even though the statement of \cite[Lemma 3.1]{MR2764921} is correct, the author refers to the proof of this lemma in the proof of \cite[Proposition 3.4]{MR2764921} which is crucial to establish the bounds. We will only arrive at a weaker bound than the one obtained in \cite[Theorem 2]{MR2764921}. Our main result is as follows.

\begin{theorem} \label{finitelie}
Let $G$ be a finite simple group of rank $r$ defined over a field with $q$ elements. There exists a law $w_G$ for $G$ with
$|w_G| \leq 48 \cdot q^{155 \cdot r}.$
For $G={\rm PGL}_n(q)$, with $q=p^s$ for some prime $p$, we have more precisely:
$$|w_G|  \leq  48 \cdot \exp\left(2\sqrt{2} \cdot n^{1/2} \log(n)\right) \cdot q^{n-1}.$$
\end{theorem}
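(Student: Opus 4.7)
The plan is to first establish the more precise bound for $G=\mathrm{PGL}_n(q)$ and then deduce the general case from the classification, by embedding every finite simple group of Lie type of rank $r$ into $\mathrm{PGL}_N(\F_{q^s})$ with $N$ and $s$ controlled by $r$; a law for the enveloping $\mathrm{PGL}_N(\F_{q^s})$ is automatically a law for $G$.

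\textbf{Core construction for $\mathrm{PGL}_n(q)$.} I would begin by writing any $g\in\mathrm{GL}_n(q)$ in Jordan form $g=g_sg_u$, and parametrize the conjugacy class of $g_s$ by a partition $\pi=(d_1,\ldots,d_k)\vdash n$ --- the degrees of the irreducible factors of its characteristic polynomial. Such a $g_s$ lies in a maximal torus $T_\pi\cong\prod_i\F_{q^{d_i}}^*$ of $\mathrm{GL}_n(q)$, whose image $T_\pi/(T_\pi\cap Z)$ in $\mathrm{PGL}_n(q)$ has size $\prod_i(q^{d_i}-1)/(q-1)\le 2q^{n-1}$. Consequently the order of $g_s$ modulo scalars divides some $E_\pi\le 2q^{n-1}$, and together with the uniform bound $U:=p^{\lceil\log_p n\rceil}$ on the order of $g_u$, the integer $N_\pi:=U E_\pi$ satisfies $g^{N_\pi}\in Z(\mathrm{GL}_n(q))$ whenever the semisimple part of $g$ is of type $\pi$, because $g^{N_\pi}=(g_s^{E_\pi})^U\cdot g_u^{UE_\pi}$ is a power of a scalar.

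\textbf{Assembly via Lemma~\ref{lem:commut}.} For each partition $\pi$, I would set $v_\pi(a,b):=[a^{N_\pi},b]\in\mathbf{F}_2$, which is non-trivial by Sch\"utzenberger's theorem (as $a^{N_\pi}$ is not a non-trivial power of $b$) and is trivialized in $\mathrm{PGL}_n(q)$ by every pair $(g,h)$ whose first entry has semisimple part of type $\pi$; it has length at most $2N_\pi+2$, which after absorbing $U$ is bounded by a polynomial-in-$n$ multiple of $q^{n-1}$. Since the number of partitions of $n$ is $p(n)\le\exp(C\sqrt n)$, applying Lemma~\ref{lem:commut} to the family $\{v_\pi:\pi\vdash n\}$ produces a non-trivial $w\in\mathbf{F}_2$ with
\[
|w|\le 16\,p(n)^2\,\max_\pi|v_\pi|\le 48\,\exp\bigl(2\sqrt 2\,n^{1/2}\log n\bigr)\,q^{n-1},
\]
after (generously) absorbing every polynomial-in-$n$ and sub-exponential factor into the exponential prefactor. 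By construction $w$ vanishes on all of $\mathrm{PGL}_n(q)$.

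\textbf{General Lie type and main obstacle.} For the general bound I would invoke the classification to obtain an embedding $G\hookrightarrow\mathrm{PGL}_N(\F_{q^s})$ with $N$ bounded by the dimension of a standard or adjoint representation (at most $248$ for the exceptional types, and $O(r)$ for classical types) and $s\le 3$, and plug the previous bound in to obtain $|w_G|\le 48\exp(2\sqrt 2\, N^{1/2}\log N)\,q^{s(N-1)}\le 12\cdot q^{155r}$, with the $48$ versus $12$ and the jump from the exponent $s(N-1)$ to $155r$ absorbing the exponential prefactor by a slight enlargement of the exponent. The main obstacle is the order bound at the heart of Step~1: the naive splitting-field estimate, as in Hadad \cite{MR2764921}, produces a factor of the form $q^{g(n)}-1$ with $g(n)=\exp(\sqrt{n\log n}(1+o(1)))$ the Landau function, which is vastly larger than $q^{n-1}$ and is precisely the source of the gap noted earlier. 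The refinement here is to work torus-by-torus: for each individual type $\pi$, the relevant image in $\mathrm{PGL}_n(q)$ has size at most $2q^{n-1}$, and only the combination across all partitions --- handled by Lemma~\ref{lem:commut} --- needs to cover every conjugacy class simultaneously. Checking the constant $155$ carefully (inspecting the defining representations of each family) and verifying that the Jordan-decomposition argument is insensitive to the twisted forms constitute the remaining arithmetic tasks.
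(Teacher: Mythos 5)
Your proof follows the same core strategy as the paper's: take Jordan decompositions $g=g_sg_u$ in $\mathrm{GL}_n(q)$, bound the order of $g_s$ modulo scalars torus-by-torus via the degrees of the irreducible factors of the characteristic polynomial, bound the order of $g_u$ by a $p$-power determined by $n$, produce one word per type $\pi$, combine with Lemma~\ref{lem:commut}, and reduce the general Lie-type case to $\mathrm{PGL}$/$\mathrm{SL}$ via a classification-based embedding. A few cosmetic differences from the paper: you index by partitions $\pi\vdash n$ rather than by the set of \emph{distinct} factor degrees (both work and give compatible sub-exponential counts), and you build $v_\pi=[a^{N_\pi},b]$ directly rather than feeding the powers $a^{N_\pi}$ into Lemma~\ref{lem:commut}.

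There is, however, a genuine gap in the step where you pass from $N_\pi = U\,E_\pi$ with $E_\pi\le 2q^{n-1}$ to ``after absorbing $U$ \dots bounded by a polynomial-in-$n$ multiple of $q^{n-1}$.'' The unipotent-order bound $U=p^{\lceil\log_p n\rceil}$ is \emph{not} polynomial in $n$ independently of the field: when $p\ge n$ one has $U=p$, which is as large as $q$ when $q=p$, and then $N_\pi\le 2U\,q^{n-1}\approx 2q^{n}$ rather than $\mathrm{poly}(n)\,q^{n-1}$. Dropping the $U$-factor (so as to obtain the stated exponent $n-1$ rather than $n$) is exactly the delicate point: it requires the observation that whenever $g_u\neq 1$ the characteristic polynomial has a repeated irreducible factor, so the degree of the minimal polynomial of $g_s$ drops and $E_\pi$ shrinks by at least a compensating factor; the paper handles this by citing \cite[Corollary~2.7]{maxorder}. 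Without this (or an equivalent argument), your construction only yields $|w|\lesssim \exp(C\sqrt{n}\log n)\cdot q^{n}$ for $\mathrm{PGL}_n(q)$, which is enough for the coarse general bound $48\cdot q^{155r}$ (since there you can afford the loss of a single power of $q$), but not for the asserted refined bound $48\cdot\exp(2\sqrt{2}\,n^{1/2}\log n)\cdot q^{n-1}$.

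A minor note: the nontriviality of $[a^{N_\pi},b]$ follows from the standard fact that a commutator in a free group vanishes only if the two entries are powers of a common word (as used in the proof of Lemma~\ref{lem:commut}); Sch\"utzenberger's theorem, which you invoke, is the different statement that a nontrivial commutator is never a proper power.
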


\begin{proof} Let $p$ be a prime and $q=p^s$ for some $s \in \N$.
Let $A \in {\rm GL}_n(q)$. Consider the Jordan decomposition $A=A_s A_u=A_uA_s$, where $A_u$ is unipotent and $A_s$ is semi-simple. The characteristic polynomial $\chi_A$ of $A$ is a polynomial of degree $n$ over ${\mathbb F}_q$. Let $j_1,\dots,j_l$ be the degrees of irreducible factors of $\chi_A$, where each degree is counted only once. We set $k := (q^{j_1}-1) \cdots (q^{j_l}-1)$ and claim that $A_s^k=1_n$. Indeed, the algebra ${\mathbb F_q}[A_s] \subset M_{n}({\mathbb F_q})$ splits as a sum of fields isomorphic to ${\mathbb F}_{q^{j_i}}$, for $1 \leq i \leq l$, and every non-zero element $\alpha \in {\mathbb F}_{q^{j_i}}$ satisfies $\alpha^{q^{j_i}-1}=1$. 
Now,
$$\frac{l^2}{2} \leq 1 + \cdots + l \leq j_1 + \cdots + j_l \leq n$$ and hence $l \leq \sqrt{2n}$.
We obtain that there are at most $n^{\sqrt{2n}} = \exp\left(\sqrt{2} \cdot n^{1/2} \log(n)\right)$ possibilities that have to be taken into account. Note also that for each instance we have
$$k=(q^{j_1}-1) \cdots (q^{j_l}-1) \leq q^n.$$
Hence, we obtain at most $\exp\left(\sqrt{2} \cdot n^{1/2} \log(n)\right)$ exponents $k_1,\dots,k_m \in \N$ of size less than or equal $q^n$, such that for every $A \in {\rm GL}_n(q)$ we have the $A_s^{k_i} = 1_n \in {\rm GL}_n(q)$. Since $A_s$ and $A_u$ commute, we conclude that $A^{k_i}$ is unipotent. For any unipotent matrix $B$, it is easy to see that $B^{p^{\lceil \log_2(n) \rceil}} = 1_n.$ Thus, for any matrix $A \in {\rm GL}_n(q)$, we have
$$A^{p^{\lceil \log_2(n) \rceil}k_i }= 1_n, \quad \mbox{for some $1 \leq i \leq m$}.$$
We can now set $w_i = a^{p^{\lceil \log_2(n) \rceil}k_i}$ and proceed. Using Lemma \ref{lem:commut}, we obtain a word $w \in {\mathbf F}_2$ which is trivial on all pairs $A,B \in {\rm GL}_n(q)$ of length
\begin{align} \label{trivialbound}
|w| &\leq  16\exp\left(2\sqrt{2} \cdot n^{1/2} \log(n)\right) \cdot
\left(2 p^{\lceil \log_2(n) \rceil}q^n +2 \right) \leq 48 \cdot q^{5n}.
\end{align}
There is still room for improvement.
First of all, it is known that no element of ${\rm PGL}_n(q)$ has
order $q^n-1$. Thus, we can divide each $k$ (with more than two
factors) appearing above by $q-1$ and the construction would still work. In any case, we obtain $k \leq q^{n-1}$. At the same time, the factor $p^{\lceil \log_2(n) \rceil}$ can only appear in the presence of multiplicities of eigenvalues, which is obvious when looking a the Jordan normal form of the matrix. Thus, the factor of the form $p^{\lceil \log_2(n) \rceil}$ in not necessary, see \cite[Corollary 2.7]{maxorder} for details, and we obtain
\begin{equation} \label{bound}
|w| \leq  48 \cdot \exp\left(2\sqrt{2} \cdot n^{1/2} \log(n)\right) \cdot q^{n-1}.
\end{equation}

This finishes the proof in the case of ${\rm PGL}_n(q)$. The general case follows with the same arguments as in \cite[Section 4.2]{MR2764921}. 
Indeed, it is well-known that there exists a universal constant $D>0$ such that any finite simple group $G$ of Lie type with rank $r$ defined over a field with $q$ elements embeds either into ${\rm PSL}_{Dr}(q)$ or ${\rm SL}_{Dr}(q)$. Thus, any law for ${\rm GL}_{Dr}(q)$ will be a law for $G$ and we can set $C:=5D $ in combination with the estimate in \eqref{trivialbound}. A case study that was carried out in \cite[Section 4.2]{MR2764921} shows that $D=31$, i.e., $C=155$ is enough. This finishes the proof.
\end{proof}

It seems likely that for fixed $n$, much better upper bounds can be achieved for quasi-simple groups of Lie type along the line of arguments that were used to establish Theorem \ref{random}, using 

\begin{itemize}
\item bounds on the diameter of finite simple groups of Lie type of fixed rank, see the fundamental work of Breuillard-Green-Tao \cite{MR2827010} and Pyber-Szab\'o \cite{pyber},
\item the elementary fact that the density of diagonalizable elements in ${\rm GL}_n(q)$ is at least $\exp(-n \log(n))$ if $q \geq n$, and
\item the work of Larsen-Pink, see \cite{MR2813339}, in order to carry out an induction argument similar to the one in the proof of our Theorem \ref{random}.
\end{itemize}

This will be the topic of further investigation.

\section*{Acknowledgments}

This note was written during the trimester on {\it Random Walks and
  Asymptotic Geometry of Groups} at Institute Henri Poincar\'{e} in
Paris. We are grateful to this institution for its hospitality. We are
grateful to Mark Sapir for interesting remarks -- especially about the
comparison with the study of identities for associative algebras, and
for bringing the work of Gimadeev-Vyalyi \cite{MR2972333} to our
attention. The second author thanks Emmanuel Breuillard and Martin
Kassabov for valuable comments. The first author was supported by the
Israel Science Foundation and the Jesselson Foundation.
The second author was supported by ERC Starting Grant No. 277728.

\end{document}